\documentclass[journal]{new-aiaa}
\usepackage[utf8]{inputenc}
\usepackage{textcomp}

\usepackage{graphicx}

\usepackage{amsmath}
\usepackage[version=4]{mhchem}
\usepackage{siunitx}
\usepackage{longtable,tabularx}
\setlength\LTleft{0pt} 

\usepackage{alphabeta}
\usepackage{xcolor}
\usepackage{booktabs}

\usepackage{amsthm}

\newtheorem{lemma}{Lemma}
\newtheorem{corollary}{Corollary}
\newtheorem{proposition}{Proposition}
\theoremstyle{definition}
\newtheorem{definition}{Definition}
\theoremstyle{remark}
\newtheorem{remark}{Remark}

\usepackage{tikz,pgfplots}
\usetikzlibrary{calc,intersections,through,backgrounds, math, arrows.meta}
% EXTERNALIZATION
\usetikzlibrary{external}

% Scaling of TikZ pictures
\usepackage{tikzscale}
\tikzset{no slope/.code={\pgfslopedattimefalse}}
\tikzset{vect/.style={->,shorten >=0.5pt,>=latex, line width = 0.2}}
\tikzset{point/.style={circle, fill, inner sep=1.7}}
\pgfplotsset{compat=1.18}
\usepgfplotslibrary{patchplots}
\usepgfplotslibrary{fillbetween}
\pgfplotsset{%
    layers/standard/.define layer set={%
        background,axis background,axis grid,axis ticks,axis lines,axis tick labels,pre main,main,axis descriptions,axis foreground%
    }{
        grid style={/pgfplots/on layer=axis grid},%
        tick style={/pgfplots/on layer=axis ticks},%
        axis line style={/pgfplots/on layer=axis lines},%
        label style={/pgfplots/on layer=axis descriptions},%
        legend style={/pgfplots/on layer=axis descriptions},%
        title style={/pgfplots/on layer=axis descriptions},%
        colorbar style={/pgfplots/on layer=axis descriptions},%
        ticklabel style={/pgfplots/on layer=axis tick labels},%
        axis background@ style={/pgfplots/on layer=axis background},%
        3d box foreground style={/pgfplots/on layer=axis foreground},%
    },
}

\hypersetup{urlcolor=purple, citecolor=red, linkcolor=blue, pdftitle={Basic Engagement Zones}, pdfauthor={Von Moll, Weintraub}}

\usepackage[nameinlink]{cleveref}

\crefmultiformat{equation}{Eqs.~(#2#1#3)}%
{ and~(#2#1#3)}{, (#2#1#3)}{ and~(#2#1#3)}
\crefrangeformat{equation}{Eqs.~(#3#1#4)--(#5#2#6)}
\crefformat{equation}{Eq.~(#2#1#3)}

\crefmultiformat{figure}{Fig.~#2#1#3}%
{ and~Fig.~#2#1#3}{, Fig.~#2#1#3}{ and~Fig.~#2#1#3}
\crefrangeformat{figure}{Figs.~#3#1#4 to~#5#2#6}
\crefformat{figure}{#2Fig.~#1#3}
\Crefformat{figure}{#2Fig.~#1#3}

\crefrangeformat{lemma}{Lemmas~#3#1#4--#5#2#6}
\crefrangeformat{theorem}{Theorems~#3#1#4--#5#2#6}

\DeclareMathOperator*{\atan}{atan2}

\usepackage{subfigure}

\newcommand\blfootnote[1]{%
  \begingroup
  \renewcommand\thefootnote{}\footnote{#1}%
  \addtocounter{footnote}{-1}%
  \endgroup
}

\title{Basic Engagement Zones}

\author{Alexander Von Moll\footnote{Aerospace Engineer} and Isaac Weintraub\footnote{Electronics Engineer, AIAA Senior Member}}
\affil{Air Force Research Laboratory, 2210 8th St. WPAFB, OH 45433 USA}

\begin{document}

\maketitle

\section{Introduction}
\label{sec:introduction}

\lettrine{N}avigation in threat-laden environments is a fundamental problem for many applications and missions~\cite[pp.~III-1]{scott2017countering}.
\blfootnote{
    This paper is based on work performed at the Air Force Research Laboratory (AFRL) \textit{Control Science Center}.
    Distribution Statement A: Approved for Public Release;
    Distribution is Unlimited.
    PA\# AFRL-2023-5574.}
For example for a supply airplane to reach its destination in a reasonable time, it may have to fly over or near unfriendly areas containing one or more ground-based enemy defensive assets, which could fire upon the airplane should it come too close.
Ideally, the airplane and its crew strives to reach a designated goal as quickly as possible, while avoiding aggressive maneuvers to avoid interception, should one of the defensive assets decide to engage.
While there is a wealth of literature concerning navigation in obstacle-laden environments (c.f., e.g., ~\cite{patle2019review:}), there is a gap concerning the case when the ``obstacles'' are not only dynamic but also \textit{adversarial}.

When it comes to modeling adversarial engagements between mobile agents, a common paradigm has been differential games (c.f., e.g., \cite{isaacs1965differential}).
If a scenario can be described wherein two players have a diametrically opposed objective that can be distilled to a single scalar value, one can formulate a zero-sum differential game.
The solution of a differential game may be comprised of the Value function that describes the equilibrium value of the cost/reward as a function of the initial condition as well as the equilibrium control policies for the two players (often, the Nash equilibrium is the desired solution concept).
Two particular classes of differential game problems are particularly relevant to the adversarial navigation problem described above: \textit{pursuit-evasion} and \textit{target guarding} (which also appears in the literature as \textit{reach-avoid}).
In the former, a Pursuer generally seeks to capture an Evader in minimum time or minimum control effort, while in the latter, a Defender generally seeks to capture an Attacker before the latter is able to reach some target or goal region; many of these problems and the literature surrounding them are described in~\cite{weintraub2020introduction} .
However most of these solutions, in themselves, are not well-suited to address the navigation problem.
With pursuit-evasion solutions, for example, it's difficult to incorporate objectives for the Evader outside of merely evading without completely changing the problem formulation.
Target Guarding solutions, on the other hand, typically delineate where breach or capture can be guaranteed but generally do not account for possible constraints on the Defender.

Some notable and relevant exceptions include the following works.
In~\cite{chen2022geometric} the authors explicitly formulated a target guarding game with a fixed time limit.
This could be seen as a constraint on the Defender's onboard fuel, for example.
However, the solution only answers the question of what is the outcome (breach or capture) and equilibrium headings of the agents as a function of the initial positions assuming that the engagement (i.e., the playout of the differential game) begins immediately.
In other words, it does not directly address the strategy of either agent before or after the engagement.
Another example is the work in~\cite{patil2023risk-minimizing}.
There, a zero-sum stochastic differential game is formulated in which one player desires to keep the state of the system in the safe set for some specified duration (akin to a range or fuel constraint) while the other desires to cause the system to enter the unsafe set.
Like the previous work, this solution describes what the agents should do once the clock starts but does not directly address how the Evader should navigate to avoid (\textit{a priori}) getting into a situation in which it could lose this game.

In order to begin addressing what may happen before or after an engagement takes place we introduce the following.
\begin{definition}[Engagement Zone]
    \label{def:engagement_zone}
    Given a Mobile Agent, $A$, Threat, $T$, their respective dynamic models, $\dot{\mathbf{x}}_A$, $\dot{\mathbf{x}}_T$, and an Agent strategy, $u_A(t)$, an Engagement Zone (EZ) is a region of the state space in which it is possible for the Threat to neutralize the Mobile Agent if the latter does not deviate from its current strategy.
\end{definition}
Note this is just one particular definition for EZ.
The interpretation is this: if the Agent remains outside of the EZ associated with a particular Threat that the latter may not have any incentive to actively engage the Agent as 1) neutralization (e.g., capture) cannot be guaranteed and 2) the Agent does not even need to actively maneuver/evade to avoid the Threat.
The latter point distinguishes the EZ from the less conservative approach of win/lose regions obtained from solutions of differential games.
Navigating near the win/lose region boundary could necessitate an aggressive maneuver by the Agent should the Threat decide to engage.

Some earlier papers by the authors have begun to address navigation around EZs.
In~\cite{weintraub2022optimal} the single-Agent, single-EZ navigation problem was introduced.
There, path plans were devised for which the Agent avoided entering into the EZ and others which allowed some penetration into the EZ to reduce overall travel time or reach the goal at a specified arrival time.
Then the results were extended in~\cite{dillon2023optimal} to address navigation of a single vehicle around (or through) two EZs.
In both of these works, a notional EZ model based on a cardioid shape was assumed.
This EZ was not tied to a particular model of the Threat itself.
In practice, for a particular Agent and Threat model, one may compute EZs via simulation, in which case the EZ may be data-based and not analytic which could make planning a path more computationally expensive.
This paper makes the following two contributions:
1) a set of EZs that are based on first-principle models of the Agent and Threat
2) a direct comparison of EZ-based navigation and path-planning to the nominal approach of circumnavigation.

The remainder of the paper is organized as follows.
\Cref{sec:Pursuit-Evasion,sec:Turret-Evasion} model the Threat as a Pursuer and a Turret, respectively, and derive the associated EZs.
\Cref{sec:Application_to_Path_Planning} highlights path planning as a potential application of EZs.
\Cref{sec:conclusion} concludes the paper.

\section{Pursuit-Evasion}
\label{sec:Pursuit-Evasion}

\subsection{Basic Pursuit-Evasion Model}
\label{sec:Basic_Pursuit-Evasion_Model}

In this section, the Threat from~\Cref{def:engagement_zone} is specialized to be a Pursuer.
Perhaps the most basic dynamic model to consider for pursuit-evasion is that of both agents moving with simple motion in the two-dimensional plane (i.e., $A, P \in \mathbb{R}^2$).
That is, both agents move with constant speeds and have control over their instantaneous heading~\cite{weintraub2023range-limited}:
\begin{equation}
    \label{eq:f_pursuit_evasion}
    \dot{\mathbf{x}} = \begin{bmatrix} \dot{\mathbf{x}}_P \\ \dot{\mathbf{x}}_A\end{bmatrix} =
    \begin{bmatrix}
        \dot{x}_P \\
        \dot{y}_P \\
        \dot{x}_A \\
        \dot{y}_A 
    \end{bmatrix} =
    \begin{bmatrix}
        v_P \cos ψ_P \\
        v_P \sin ψ_P \\
        v_A \cos ψ_A \\
        v_A \sin ψ_A
    \end{bmatrix},
    \qquad
    \mathbf{x}(0) =
    \begin{bmatrix}
        x_{P_0} \\
        y_{P_0} \\
        x_{A_0} \\
        y_{A_0}
    \end{bmatrix}
\end{equation}
where the Mobile Agent's control is $u_A \equiv ψ_A$.
For this scenario, neutralization is said to occur if $A$ comes within $r\geq 0$ distance of $P$ (i.e., $A$ is captured by $P$):
\begin{equation}
    \label{eq:neutralization_pursuit_evasion}
    \mathcal{N} = \left\{ \mathbf{x} \mid \overline{AP} \leq r \right\},
\end{equation}
where $\mathcal{N}$ is the neutralization set, and the notation $\overline{AP}$ is used to indicate the distance between two points, e.g., $A$ and $P$.
Define the speed ratio of Mobile Agent and Pursuer speeds as $μ = \tfrac{v_A}{v_P} > 0$.
Note that since the EZ is based on the Mobile Agent implementing zero control capture may be possible, depending on the initial conditions, even when $μ > 1$.
It is common to consider such a pursuit-evasion scenario as-is without any additional constraints.
Alternatively, some work has been done on similar scenarios taking place in the presence of obstacles~\cite{oyler2016pursuit-evasion}, in bounded domains~\cite{zhou2016cooperative}, and under a variety of other geometric and integral constraints~\cite{ibragimov1998game}.
This paper considers the Pursuer to be range-limited with maximum range $R$ as in~\cite{weintraub2023range-limited}.
Thus the Pursuer is capable of reaching any point within the disk of radius $R$ centered at its initial position, and thus its reachability region is
\begin{equation}
    \label{eq:P_reachability}
    \mathcal{R}_P = \left\{ (x,y) \mid \left(x-x_{P_0}\right)^2 + \left( y - y_{P_0} \right)^2 \leq R^2\right\}.
\end{equation}
Note that one possible interpretation of the range-limit is that the Pursuer may have arbitrary endurance as long as it stays within $R$ of its initial position.
Under this interpretation, the range-limit may represent a maximum communication range or sensing range for some base station located at the Pursuer's initial position.
Another interpretation is that the Pursuer may only traverse a total distance $R$ along its trajectory, which does not necessarily preclude the Pursuer from stopping.
Based on the constant speed model of the Pursuer, however, stopping would require, e.g., modulating its heading, $ψ_P$, infinitely fast.
In this work, such a maneuver, although mathematically possible, is considered to consume the Pursuer's remaining range at the same rate as if it were moving.
Therefore, the range-limit is analogous to a time-limit -- the Pursuer may be equipped with, for example, a solid rocket motor which, once started, has a fixed time at which fuel will run out (barring changes in environmental conditions or changes in various forces experienced along the trajectory).

\begin{remark}
    If the Pursuer has additional, unmodeled dynamics (e.g., a bounded turn rate) then this analysis is conservative.
\end{remark}

\subsection{Pursuit-Evasion EZ}
\label{sec:Pursuit_Evasion_EZ}

\begin{lemma}
    Under the model~\Cref{eq:f_pursuit_evasion} if the Mobile Agent, starting from $A_0$ and moving with $ψ_A$ cannot be neutralized by $P$ under collision course guidance then the point $A_0$ is outside of the EZ.
\end{lemma}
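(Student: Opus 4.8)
The plan is to show that \emph{collision-course guidance} is an optimal pursuit strategy against a fixed-heading Agent, in the sense that it neutralizes $A$ whenever \emph{any} admissible Pursuer strategy does. The lemma is then the contrapositive: $A_0$ lies outside the EZ precisely when no admissible Pursuer strategy neutralizes $A$, so it suffices to know that collision-course guidance fails.

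The first step is to pass to the Agent's inertial frame. Since $A$ holds heading $\psi_A$, it moves with the constant velocity $\mathbf v_A := v_A(\cos\psi_A,\sin\psi_A)$, and the relative position $\mathbf z(t) := \mathbf x_P(t)-\mathbf x_A(t)$ obeys $\dot{\mathbf z} = v_P(\cos\psi_P(t),\sin\psi_P(t)) - \mathbf v_A$; neutralization is equivalent to $\|\mathbf z(t)\|\le r$ at some admissible time. Because $P$ travels at constant speed $v_P$, the arc length it has flown by time $t$ equals $v_P t$, so staying in $\mathcal R_P$ along the trajectory is equivalent to the path-length (equivalently, time) bound $v_P t \le R$. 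Integrating, $\mathbf z(t) = \mathbf z_0 - \mathbf v_A t + \int_0^t v_P(\cos\psi_P,\sin\psi_P)\,ds$; and since the control integral sweeps out exactly the closed disk of radius $v_P t$ about the origin (contained by the triangle inequality, every point attained by a piecewise-constant heading), neutralization is possible by \emph{some} strategy within the range limit iff there is a $t$ with $v_P t \le R$ and $\|\mathbf z_0 - \mathbf v_A t\| \le r + v_P t$.

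The crux is to show that this feasibility condition is always realized by a straight-line Pursuer path. If $\|\mathbf z_0\|\le r$, then $A$ is already neutralized at $t=0$. Otherwise the continuous function $g(t) := \|\mathbf z_0 - \mathbf v_A t\| - v_P t$ satisfies $g(0) = \|\mathbf z_0\| > r$ yet $g(t^*)\le r$ at the feasible time $t^*$ from the previous step, so by the intermediate value theorem there is a smallest $t_1 \le t^*$ with $\|\mathbf z_0 - \mathbf v_A t_1\| = r + v_P t_1$. Reaching the $r$-ball at time $t_1$ then forces the control integral to have norm exactly $v_P t_1 = \int_0^{t_1}\|v_P(\cos\psi_P,\sin\psi_P)\|\,ds$, and the equality case of the triangle inequality forces $\psi_P$ to be constant almost everywhere. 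Hence the earliest intercept is a straight line — the constant heading on which $P$'s and $A$'s straight-line paths close to within $r$, i.e., a collision course — and its length $v_P t_1 \le R$ keeps it in $\mathcal R_P$. Therefore, if no collision-course heading neutralizes $A$ from $A_0$ with heading $\psi_A$, no Pursuer strategy does, so $A_0$ is outside the EZ.

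I expect the main obstacle to be this last step: showing that the earliest feasible intercept is necessarily a straight-line pursuit via the triangle-inequality equality argument, once the range limit has been reduced to a clean path-length/time bound. The frame change and the disk-shaped reachable-set computation are routine, and the degenerate cases ($\|\mathbf z_0\|\le r$ at the outset, or two candidate collision-course headings) need only a brief remark, since we only require that \emph{some} such heading works.
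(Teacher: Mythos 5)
Your proof is correct, and it reaches the same conclusion by the same logical skeleton as the paper --- ``collision course is the best the Pursuer can do, so if it fails within range $R$ nothing succeeds'' --- but it differs in substance: the paper's proof simply \emph{cites} the time-optimality of collision-course interception against a non-maneuvering target (from the authors' earlier range-limited pursuit work) and immediately takes the contrapositive, whereas you actually \emph{prove} that optimality from first principles. Your reachable-set argument in the Agent's frame (the control integral sweeps the disk of radius $v_P t$, feasibility at time $t$ is $\lVert \mathbf{z}_0 - \mathbf{v}_A t\rVert \le r + v_P t$, the intermediate value theorem yields a first tangency time $t_1$, and the equality case of the triangle inequality forces a constant heading there) is sound; in particular the tangency point $\mathbf{z}(t_1) = \tfrac{r}{r + v_P t_1}\left( \mathbf{x}_{P_0} - \mathbf{x}_A(t_1) \right)$ places $P_f$ on the segment from $P_0$ to $A(t_1)$ at distance $r$, which is exactly the paper's collision-course geometry with capture radius. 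What your version buys is self-containedness and rigor where the paper leans on a reference; what it costs is length. One small wording slip: you assert that staying in $\mathcal{R}_P$ is \emph{equivalent} to the path-length bound $v_P t \le R$, but a curved path longer than $R$ can remain inside the disk; the two interpretations differ in general. This is immaterial here because the paper explicitly adopts the total-traversed-distance (time-limit) interpretation of the range constraint, which is the one your argument uses, but you should state that you are adopting that interpretation rather than deriving it from $\mathcal{R}_P$.
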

\begin{proof}
    Collision course, wherein the Pursuer takes a straight-line path along the line connecting its initial position to the Mobile Agent's position at the time of neutralization, yields the minimum time trajectory for $P$ to intercept $A$~\cite{weintraub2023range-limited}.
    Therefore, if the collision course trajectory results in $P$ travelling a distance greater than $R$, then the Mobile Agent cannot be neutralized under any guidance law.
    Thus, from~\Cref{def:engagement_zone}, the initial position of $A$ must be outside the EZ.
\end{proof}

\begin{corollary}
    The boundary of the EZ is the locus of all Mobile Agent initial positions corresponding to neutralization by the Pursuer via a collision course trajectory of length equal to its maximum range, $R$.
\end{corollary}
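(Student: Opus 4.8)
The plan is to identify the EZ explicitly as a sublevel set of the collision-course path length and then use continuity and monotonicity to pin down its topological boundary. Fix the Mobile Agent's heading $\psi_A$ together with the speeds $v_A, v_P$, and for a candidate initial position $A_0$ let $L(A_0)$ denote the length of the Pursuer's collision-course trajectory that results in neutralization of $A$ (set $L(A_0) = +\infty$ when no such interception exists, e.g.\ when $\mu \ge 1$ and $A$ moves directly away from $P$). By the optimality of collision-course guidance invoked in the preceding Lemma, the range-limited Pursuer can neutralize $A$ if and only if $L(A_0) \le R$. Hence, directly from \Cref{def:engagement_zone}, the EZ is exactly the sublevel set $\mathcal{E} = \{A_0 : L(A_0) \le R\}$: the preceding Lemma furnishes the inclusion that $\{L > R\}$ lies outside the EZ, and the reverse inclusion is immediate since on $\{L \le R\}$ the Pursuer simply executes the collision course.

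Second, I would show that $L$ is continuous on the open set where it is finite and that the level $R$ is attained only on a set with empty interior, so that $\partial\mathcal{E} = \{A_0 : L(A_0) = R\}$. Continuity holds because the collision-course interception point, and thus the Pursuer's path length, depends continuously (indeed smoothly away from degeneracies) on $A_0$ — the interception geometry reduces to a quadratic (a law-of-cosines relation) whose coefficients are smooth in $A_0$. To exclude a ``thick'' level set I would exhibit strict monotonicity of $L$ along a suitable direction: translating $A_0$ along the collision-course line away from $P_0$ strictly increases $L$, while translating it toward $P_0$ strictly decreases $L$. Given continuity together with this strict local monotonicity, every neighborhood of a point with $L = R$ meets both $\mathcal{E}$ and its complement, $\{L < R\}$ is the interior of $\mathcal{E}$, $\{L \le R\}$ is its closure, and therefore $\partial\mathcal{E} = \{L = R\}$, which is precisely the asserted locus.

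The main obstacle is this strict-monotonicity / non-degeneracy step: one must verify that the collision-course length genuinely has no interior level set, including in the borderline regimes. The case $\mu = 1$, configurations where capture is ``grazing'' (interception exactly at distance $r$), and the transition between finite and infinite $L$ all warrant a brief separate check; in each, however, the interception-length function still varies strictly in the relevant translation direction, so the boundary characterization is unaffected. Once that is in hand, the corollary follows from the identification $\mathcal{E} = \{L \le R\}$ established in the first step.
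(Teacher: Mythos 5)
The paper offers no separate proof of this corollary---it is presented as immediate from the lemma---and the first half of your argument is exactly the intended reasoning: the lemma gives that $\{L > R\}$ lies outside the EZ, the converse inclusion follows from \Cref{def:engagement_zone} because on $\{L \le R\}$ the Pursuer can simply fly the collision course, and hence the EZ is the sublevel set $\{L \le R\}$. That part is correct and is essentially the same route the paper takes implicitly.

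The gap is in your second step, specifically in the sentence where you dismiss ``the transition between finite and infinite $L$'' as harmless. For $μ > 1$ that transition genuinely contributes to $\partial\mathcal{E}$, and on it the identity $\partial\mathcal{E} = \{L = R\}$ fails. Concretely: for a slow Pursuer there are aspect angles at which the collision-course interception ceases to exist (the closing-speed condition $\cos ψ_P^* \le 1/μ$ becomes unsatisfiable), so $L$ jumps from a finite value to $+\infty$ across a curve in position space. The boundary of $\{L \le R\}$ along that curve consists of points where $L$ is strictly \emph{less} than $R$ --- the ``touch-and-go'' arc given by \Cref{eq:ρ_pursuit_slow_touchandgo}, where the Pursuer travels a distance $\overline{P_0P_f} \le R$, and ultimately the capture circle $\overline{P_0A_0} = r$ where $L = 0$. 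The paper's own piecewise expression \Cref{eq:ρ_pursuit_slow} makes this explicit: only the first branch of the slow-Pursuer boundary corresponds to a collision course of length exactly $R$. Your monotonicity argument (translating $A_0$ along the collision-course line changes $L$ strictly) is fine where $L$ is finite and continuous, but it cannot rule out boundary points created by the loss of solvability of the interception equation rather than by the range constraint. The fix is to restrict the topological claim to the regime where the range limit is the binding constraint --- i.e., $μ \le 1$, or the portion $\lvert ξ \rvert \le ξ_c$ of the slow-Pursuer boundary --- and to state separately that the remaining boundary arcs for $μ > 1$ arise from the interception-feasibility condition, not from $L = R$.
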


This result is useful in obtaining the boundary of the EZ as follows.
Define the quantity $ρ \equiv \overline{P_0 A_0}$ to be the initial distance between the agents resulting in the Pursuer taking a collision course trajectory of length $R$.
Then the Engagement Zone is defined mathematically by
\begin{equation}
    \label{eq:Z_pursuit}
    \mathcal{Z} = \left\{ A_0 \mid \overline{P_0 A_0} \leq ρ(ξ; μ, R, r) \right\},
\end{equation}
where $ξ$, the aspect angle, is the angle the Agent's heading makes with the initial line-of-sight angle.
Define a set of coordinate axes $(\hat{x}, \hat{y})$ whose origin is $P_0$ in which the $\hat{x}$ vector is aligned with the heading of the Mobile Agent.

\begin{remark}
    This model and the resulting EZ may be the simplest way of directly accounting for the effect of aspect angle and closing speed on the engagement which was the main physical phenomena under investigation in~\cite{weintraub2022optimal}.
\end{remark}

\subsubsection{Fast Pursuer}

For the case of a fast Pursuer ($μ \leq 1$), the Law of Cosines applied to the triangle $\triangle P_0 E_0 E_f$ may be used to obtain an expression for $ρ$ (see~\Cref{fig:EZ_fast_Pursuer}):
\begin{equation}
    \label{eq:ρ_pursuit_fast}
    ρ(ξ; μ, R, r) = μ R \left[ \cos ξ + \sqrt{\cos^2 ξ - 1 + \frac{(R + r)^2}{μ^2 R^2}} \right], \qquad μ \leq 1,\ ξ \in \left\{ -π, π \right\}.
\end{equation}
\Cref{fig:EZ_fast_Pursuer} shows an example EZ for the fast Pursuer case along with relevant geometry.
The symbol $\partial$ is used to denote the boundary of a set or region.

\begin{figure}[t]
    \centering
    \tikzsetnextfilename{EZ_fast_Pursuer}
    \includegraphics[width=0.8\textwidth]{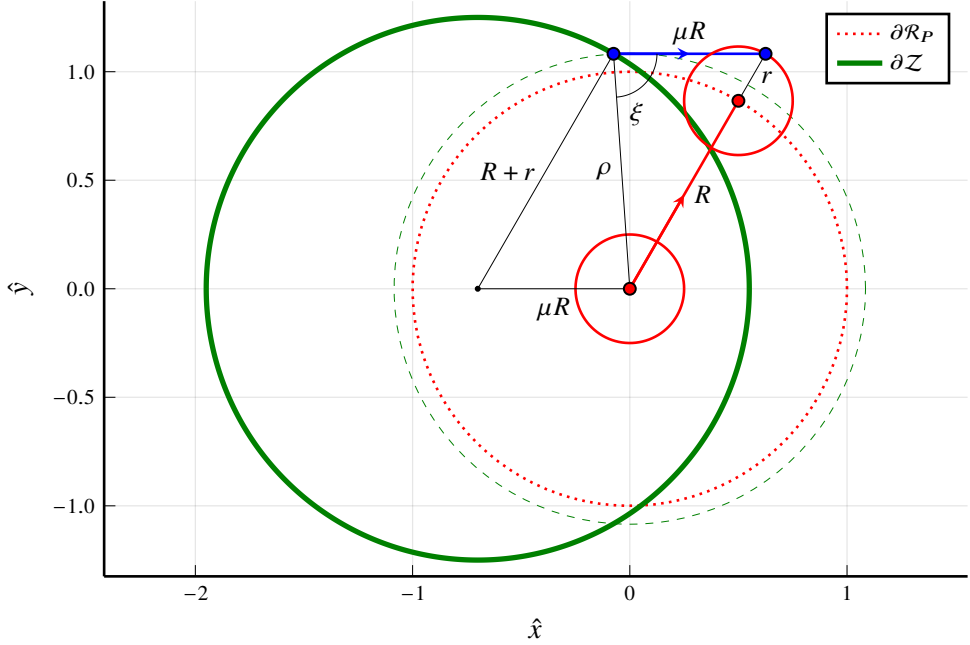}
    \caption{
        Range-limited, simple motion pursuit-evasion Engagement Zone for fixed Agent heading and fast Pursuer (speed ratio $μ = 0.7$) with finite capture radius ($r = 0.25$) and a maximum range of $R = 1$.
    }
    \label{fig:EZ_fast_Pursuer}
\end{figure}

The paper by the authors~\cite{weintraub2022optimal} contains an expression for an EZ that is similar to~\Cref{eq:ρ_pursuit_fast} but is not based on a particular model of the engagement between the Mobile Agent and the Threat.
That expression (with changed notation) is
\begin{equation}
    \label{eq:ρ_pursuit_fast_old}
    \tilde{ρ} = \frac{\cos ξ + 1}{2}\left( \tilde{ρ}_{\max} - \tilde{ρ}_{\min} \right) + ρ_{\min}
\end{equation}
where $\tilde{ρ}_{\max} \equiv \tilde{ρ}(ξ=0)$ and $\tilde{ρ}_{\min} \equiv \tilde{ρ}(ξ=π)$.
The two models for the distance of the EZ boundary from the Threat's initial position may be compared by setting $\tilde{ρ}_{\max} = \max_ξ ρ = \left( 1 + μ \right) R + r$ and $\tilde{ρ}_{\min} = \min_ξ ρ = \left( 1 - μ \right) R + r$.
\Cref{fig:comparison} shows a comparison of~\Cref{eq:ρ_pursuit_fast,eq:ρ_pursuit_fast_old} for a variety of $μ < 1$.
Note that the biggest deviation between the two models occurs for higher $μ$ and for aspect angles near the center of the domain $\lvert ξ \rvert \in \left[ 0, π \right]$.
\begin{figure}[t]
    \centering
    \tikzsetnextfilename{comparison}
    \includegraphics[width=0.8\textwidth, height=4cm]{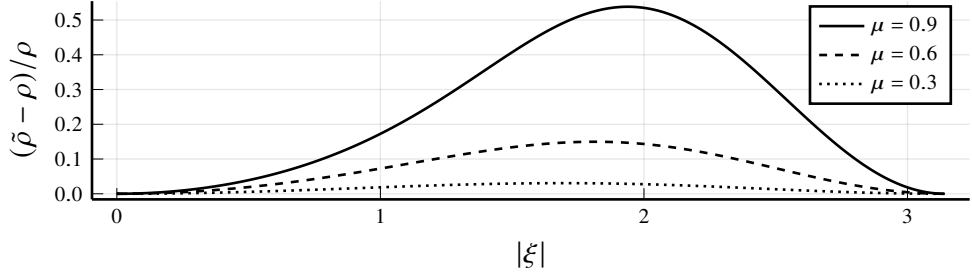}
    \caption{
        Comparison of the Pursuit-Evasion EZ model presented in this paper with a previously developed generic EZ model from~\cite{weintraub2022optimal}.
    }
    \label{fig:comparison}
\end{figure}

This section is focused on the particular interpretation of the EZ that is based on fixing $ψ_A$ and varying $ξ$ to construct the region $\mathcal{Z}$.
Another valid interpretation of the EZ is based on fixing $ξ$ and varying $ψ_A$.
The same geometry depicted in~\Cref{fig:EZ_fast_Pursuer} applies for the example shown.
Based on~\Cref{eq:ρ_pursuit_fast} it is clear that if $ξ$ is held constant $ρ$ is constant as it does not depend directly on $ψ_A$.
Thus, the EZ for a fixed aspect angle is a circle centered at $P_0$, as shown by the dashed green circle in~\Cref{fig:EZ_fast_Pursuer}.
These two interpretations of the EZ boundary yield the same $A_0$ for the points corresponding to the $ξ$ and $ψ_A$ values used.

\subsubsection{Slow Pursuer}

When the Pursuer is relatively slow ($μ > 1$) then there are aspect angles ($ξ$ values) for which neutralization is possible only if the Mobile Agent started on or within the capture disk of the Pursuer.
This is because of the fact that in order for the Pursuer to move and neutralize the Mobile Agent their separation distance must be non-increasing at the time of neutralization.
Thus the optimal Pursuer heading must be $\cos ψ_P^* \leq \cos^{-1} \tfrac{1}{μ}$.
This is akin to the Usable Part of the terminal surface in the verbiage of Isaacs~\cite{isaacs1965differential}.
An analogous result appears in the related scenario covered in~\cite{weintraub2023surveillance} in which this condition is used to delineate the positions from which a slow Observer may establish contact with a fast Target.
The locus of initial Mobile Agent positions corresponding to collision course with $ψ_P^* = \cos^{-1} \tfrac{1}{μ}$ (with the Pursuer travelling $R$ distance or less) forms a portion of the EZ boundary.
From the Law of Sines, the Pursuer's travel distance may be expressed as
\begin{equation}
    \label{eq:PP_f}
    \overline{P_0P_f} = \frac{r \sin (ξ + ψ_P)}{μ \sin ξ - \sin(ξ + ψ_P)}.
\end{equation}
Invoking the Law of Sines once more and substituting in the above with $ψ_P = \cos^{-1} \tfrac{1}{μ}$ gives
\begin{equation}
    \label{eq:ρ_pursuit_slow_touchandgo}
    ρ(ξ; μ, R, r) = \frac{r \sqrt{μ^2 - 1}}{μ \sin \lvert ξ \rvert - \sin\left( \lvert ξ \rvert + \cos^{-1} \tfrac{1}{μ} \right)}.
\end{equation}
It can be verified that the aspect angle which satisfies both~\Cref{eq:ρ_pursuit_fast,eq:ρ_pursuit_slow_touchandgo} is
\begin{equation}
    \label{eq:ξ_crossover}
    ξ_c = \pm \sin\left( \frac{\left( R + r \right) \sqrt{μ^2 - 1}}{μ R \sqrt{μ^2 - 1 + \frac{r^2}{R^2}}} \right).
\end{equation}
Of course, the last possible $ξ$ value in this family of trajectories corresponds to the one where $\overline{P_0P_f} = 0$ which gives $ξ_{\max} = π - \cos^{-1}\tfrac{1}{μ}$.
Finally, the complete expression for $ρ$ in the case of a slow Pursuer is given by
\begin{equation}
    \label{eq:ρ_pursuit_slow}
    ρ(ξ; μ, R, r) = \begin{cases}
        μ R \left[ \cos ξ + \sqrt{\cos^2 ξ - 1 + \frac{\left( R + r \right)^2}{μ^2 R^2}} \right] & \text{ if } \lvert ξ \rvert \in \left[ 0, ξ_c \right] \\
        \frac{r \sqrt{μ^2 - 1}}{μ \sin \lvert ξ \rvert - \sin\left( \lvert ξ \rvert + \cos^{-1} \frac{1}{μ} \right)} & \text { if } \lvert ξ \rvert \in \left( ξ_c, π - \cos^{-1}\frac{1}{μ} \right] \\
        r & \text{ otherwise}
    \end{cases},
    \qquad μ > 1,\ ξ \in \left[ -π, π \right].
\end{equation}
\Cref{fig:EZ_slow_Pursuer} shows an example for the slow Pursuer case along with relevant geometry.
\begin{figure}[t]
    \centering
    \tikzsetnextfilename{EZ_slow_Pursuer}
    \includegraphics[width=0.8\textwidth]{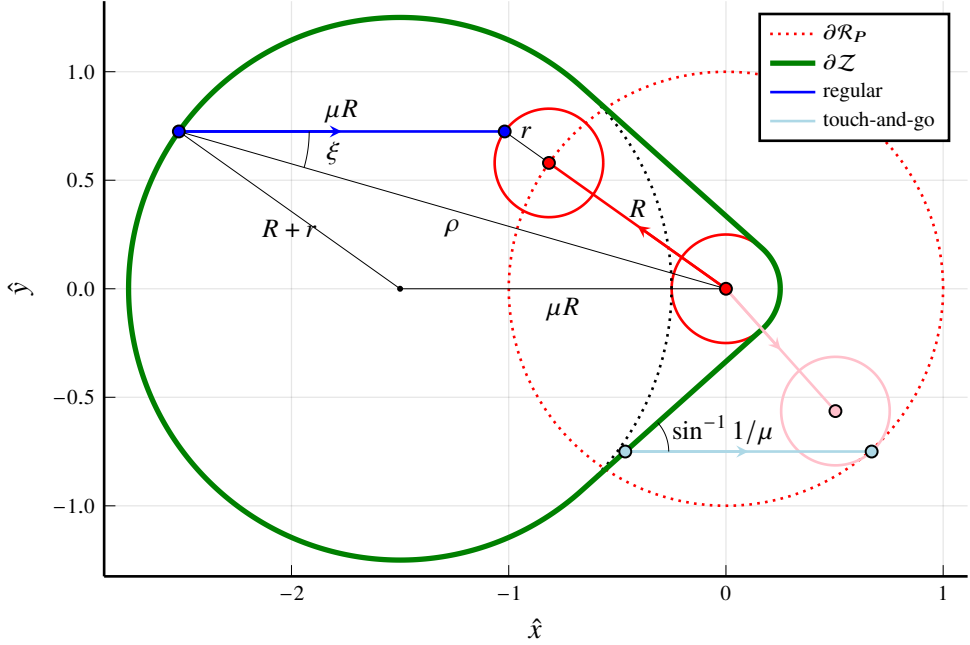}
    \caption{
        Range-limited, simple motion pursuit-evasion Engagement Zone for fixed Agent heading and slow Pursuer (speed ratio $μ = 1.5$) with finite capture radius ($r = 0.25$) and a maximum range of $R = 1$.
        Two example trajectories are shown: (blue/red) the Agent starts on the boundary of the EZ wherein the capture configuration is general, and (light blue/pink) the Agent starts on the EZ boundary wherein the Pursuer-Agent distance rate is zero at capture.
    }
    \label{fig:EZ_slow_Pursuer}
\end{figure}

\section{ Turret-Evasion}
\label{sec:Turret-Evasion}

\subsection{Basic Turret Model}
\label{sec:basic_turret_model}

In this section, the Threat from~\Cref{def:engagement_zone} is specialized to be a Turret, denoted by $T$.
The Turret is a stationary agent with a look angle, $θ$, bounded turn rate, $ω \in \left[ -\bar{\omega}, \bar{\omega} \right]$, and finite range, $R$.
As in the previous section, the Mobile Agent, moves with constant speed and controls its instantaneous heading.
A similar model has been used, e.g., in~\cite{vonmoll2023turret,galyaev2013evading,ivanov1993problem}.
The kinematics of the system in polar form are
\begin{equation}
    \label{eq:f_turret}
\dot{\mathbf{x}} = \begin{bmatrix} \dot{\mathbf{x}}_T \\ \dot{\mathbf{x}}_A \end{bmatrix} = 
\begin{bmatrix} \dot{θ} \\ \dot{x}_A \\ \dot{y}_A \end{bmatrix} =
\begin{bmatrix}
    ω \\
    v_A \cos ψ_A \\
    v_A \sin ψ_A
\end{bmatrix}, \qquad
\dot{\mathbf{x}}(0) =
\begin{bmatrix}
    θ_0 \\
    x_{A_0}\\
    y_{A_0}
\end{bmatrix}
\end{equation}
where, again, the Mobile Agent's control is $u_A \equiv ψ_A$.
Neutralization is said to occur if $\cos θ = \tfrac{x_A}{\overline{AT}}$, $\sin θ = \tfrac{y_A}{\overline{AT}}$, and $\overline{AT} \leq R$.
Define the ratio of the Mobile Agent's speed and Turret's maximum turn rate as $μ = \tfrac{v_A}{\bar{ω}}$.
Note that, without the range constraint on the Turret that neutralization is possible from any initial condition (eventually).
Similar to the range-limited Pursuer considered in the previous section, all the points within a distance of $R$ from the $T$ are considered to be reachable by the Turret, i.e.,
\begin{equation}
    \label{eq:T_reachability}
    \mathcal{R}_T = \left\{ (x,y) \mid \left( x - x_T \right)^2 + \left( y - y_T \right)^2 \leq R \right\}.
\end{equation}

\subsection{Turret EZ}
\label{sec:Turret_EZ}

The EZ constructed for the turret-evasion model is fundamentally different from the one constructed for the Pursuit-Evasion model in the previous section.
That is due to the fact that in the former it was assumed that $P$ was removed from the scenario once its range/fuel had been exhausted, whereas here the Turret is always ``active'' even if it is not moving.
For example, if the Mobile Agent begins very far away and is aimed at the Turret's position it will eventually collide with the Turret's beam thereby becoming neutralized.
Based on~\Cref{def:engagement_zone}, that point, no matter how far away, is considered to be in the EZ.
Therefore, the construction of the turret-evasion EZ is only based upon terminal Agent positions which are exiting the Turret's reachable set, $\mathcal{R}_T$ (i.e., the right half-circle in the $(\hat{x}, \hat{y})$ coordinate system).
For a particular terminal Agent position, $A_f$, it is assumed that $T$ turned in the shortest direction to align with the Agent.
Define the angle traversed by the Turret during an engagement as $γ$, thus from this assumption $γ \in \left[ -π, π \right]$.
Moreover, because only those $A_f$ positions which are exiting $\mathcal{R}_T$ matter the range for $γ$ is even further restricted:
\begin{equation}
    \label{eq:γrange}
    γ \in \begin{cases}
        \left[ -\frac{π}{2}- θ_0,\ 0 \right] \cup \left[0,\ \frac{π}{2} - θ_0 \right] & \text{ if } \cos θ_0 > 0 \\
        \left[ -π,\ \frac{π}{2} - θ_0 \right] \cup \left[ \frac{3π}{2} - θ_0,\ π \right] & \text{ if } \cos θ_0 < 0,\ \sin θ_0 > 0 \\
        \left[ -π,\ -\frac{3π}{2} - θ_0 \right] \cup \left[ -\frac{π}{2} - θ_0,\ π \right] & \text{ if } \cos θ_0 < 0,\ \sin θ_0 < 0
    \end{cases}
\end{equation}
This range for $γ$ gives results in the final Turret look angle to be in the range $θ_f \in \left[ -π/2,\ π/2 \right]$.

\begin{proposition}
    Under the model~\Cref{eq:f_turret} the most limiting scenario (in terms of initial Agent positions, $A_0$) is when $T$ turns at its maximum angular speed and neutralizes $A$ exactly when it is exiting $\mathcal{R}_T$, i.e., $\cos θ_f = \tfrac{x_{A_f}}{R}$ and $\sin θ_f = \tfrac{y_{A_f}}{R}$ where $θ_f \equiv θ_0 + γ$.
\end{proposition}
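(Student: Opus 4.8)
The plan is to show that, for a fixed Agent heading $\psi_A$, the scenarios that sweep out the boundary of the EZ are exactly the ones described, by way of two reductions. Work in the $(\hat{x},\hat{y})$ frame centered at $T$ with $\hat{x}$ along $\psi_A$, so a candidate Agent trajectory is the ray $A(t)=A_0+v_A t\,\hat{x}$, and neutralization at time $t^\ast$ requires $\lVert A(t^\ast)\rVert\le R$ together with the look angle $\theta(t^\ast)$ equal to the line-of-sight angle $\phi(t^\ast)\equiv\atan\left(y_{A_0},\,x_{A_0}+v_A t^\ast\right)$. First, maximum turn rate is without loss of generality: the EZ is the union over admissible Turret controls of the initial positions from which $T$ can neutralize $A$, and since $T$ is constrained only by $\lvert\omega\rvert\le\bar{\omega}$ and the range $R$ — it has no time budget — a look angle $\theta^\ast$ is attainable from $\theta_0$ within time $t$ under some control iff $d(\theta_0,\theta^\ast)\le\bar{\omega}t$, where $d(\cdot,\cdot)$ is arc length on $S^1$ (turn toward $\theta^\ast$ at $\pm\bar{\omega}$, then hold $\omega=0$). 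Hence the EZ equals $\left\{A_0:\exists\,t^\ast\ge 0,\ \lVert A(t^\ast)\rVert\le R\ \text{and}\ d(\theta_0,\phi(t^\ast))\le\bar{\omega}t^\ast\right\}$; a slower Turret control never enlarges this, and a maximum-turn-rate control realizes all of it.

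Second, the instant the Agent leaves $\mathcal{R}_T$ is the decisive one. The range condition confines $t^\ast$ to the interval $[t_{\mathrm{in}},t_{\mathrm{out}}]$ on which the straight-line Agent lies in $\mathcal{R}_T$ — empty, so that $A_0$ is outside the EZ, unless $\lvert y_{A_0}\rvert\le R$ and $t_{\mathrm{out}}>0$ — intersected with $t^\ast\ge 0$, so $A_0$ is in the EZ iff $d(\theta_0,\phi(t))\le\bar{\omega}t$ for some $t\in[\max(0,t_{\mathrm{in}}),t_{\mathrm{out}}]$. The key claim is that this is equivalent to the same inequality at the right endpoint $t=t_{\mathrm{out}}$, where the Agent occupies the exit point $A_f\in\partial\mathcal{R}_T$ with $\theta_f=\phi(t_{\mathrm{out}})=\theta_0+\gamma$. ``Only if'' is immediate; for ``if,'' $\phi$ varies monotonically with $t$ along a straight path, so by the triangle inequality on $S^1$, $d(\theta_0,\phi(t_{\mathrm{out}}))\le d(\theta_0,\phi(t^\ast))+\lvert\phi(t_{\mathrm{out}})-\phi(t^\ast)\rvert$, and since $d(\theta_0,\phi(t^\ast))\le\bar{\omega}t^\ast$ by hypothesis it suffices that the net line-of-sight swing over $[t^\ast,t_{\mathrm{out}}]$ not exceed $\bar{\omega}(t_{\mathrm{out}}-t^\ast)$. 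Granting this, the EZ is $\left\{A_0:\bar{\omega}\,t_{\mathrm{out}}(A_0)\ge\lvert\gamma(A_0)\rvert\right\}$ and its boundary is the locus $\bar{\omega}\,t_{\mathrm{out}}=\lvert\gamma\rvert$ — i.e.\ $T$ turns at $\bar{\omega}$ and neutralizes $A$ precisely as $A$ exits $\mathcal{R}_T$, so that $\cos\theta_f=x_{A_f}/R$ and $\sin\theta_f=y_{A_f}/R$ with $\theta_f=\theta_0+\gamma$, which is the assertion. Running this terminal configuration backward in time (exactly as was done for the Pursuer) then gives $A_0=A_f-\mu\lvert\gamma\rvert\,\hat{x}$, tracing the boundary as $\gamma$ ranges over the set in \cref{eq:γrange}.

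The main obstacle is the ``if'' direction when the Agent's path passes close to $T$: there the line-of-sight rate $\lvert\dot{\phi}\rvert=v_A\lvert y_{A_0}\rvert/\lVert A(t)\rVert^2$ can momentarily exceed $\bar{\omega}$ — precisely when the path's closest-approach distance $\lvert y_{A_0}\rvert$ falls below $\mu$ — and the triangle-inequality estimate above does not close on its own. This needs a separate argument splitting on whether $\lvert\dot{\phi}\rvert\le\bar{\omega}$ holds throughout $[\max(0,t_{\mathrm{in}}),t_{\mathrm{out}}]$: when it does, the estimate closes directly; in the complementary, close-passage configurations one verifies neutralizability directly — the line of sight sweeps rapidly across a wide arc while $\lVert A\rVert<R$ is still in force — so those $A_0$ lie well inside the EZ and do not enter the boundary construction. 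The remaining bookkeeping — the three branches of \cref{eq:γrange} determined by the signs of $\cos\theta_0$ and $\sin\theta_0$, and confirming the resulting $\theta_f\in[-\pi/2,\pi/2]$ — is routine.
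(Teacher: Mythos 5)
The paper states this proposition without any proof, so there is nothing to compare your argument against; on its own merits, your formalization is the right one and goes considerably further than the paper does. The reduction to maximum turn rate (a look angle $\theta^\ast$ is reachable by time $t$ iff $d(\theta_0,\theta^\ast)\le\bar{\omega}t$, so the EZ is $\{A_0:\exists t^\ast,\ \lVert A(t^\ast)\rVert\le R,\ d(\theta_0,\phi(t^\ast))\le\bar{\omega}t^\ast\}$) is clean and correct, and your triangle-inequality argument does establish the equivalence with the exit-time condition whenever the line-of-sight rate stays below $\bar{\omega}$ on the relevant interval, i.e.\ when the path's closest-approach distance to $T$ is at least $\mu$.

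The close-passage case, however, is a genuine gap, and the patch you propose --- that such $A_0$ ``lie well inside the EZ and do not enter the boundary construction'' --- is false in general, not merely unproven. Concretely, take $\theta_0$ with $\sin\theta_0>0$ and an Agent path at small height $y_{A_0}=\epsilon>0$, so the stationary beam segment $\{s(\cos\theta_0,\sin\theta_0):0\le s\le R\}$ crosses the path at $x=\epsilon\cot\theta_0$, well inside $\mathcal{R}_T$. Any $A_0$ with $x_{A_0}<\epsilon\cot\theta_0$ is neutralized by the Turret simply holding $\omega=0$ (an admissible control), hence lies in the EZ; but for $A_0$ just past the beam the Turret must chase a receding line of sight through an angle of roughly $\theta_0$, reaching alignment only after the Agent has travelled about $\mu\theta_0$, which exceeds the remaining range whenever $\bar{\omega}R/v_A$ is small compared to $\theta_0$. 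In that regime the point $(\epsilon\cot\theta_0,\epsilon)$ sits on $\partial\mathcal{Z}$, fails your exit-time condition $\bar{\omega}t_{\mathrm{out}}\ge\lvert\gamma\rvert$, and does not arise from any ``max-turn-rate, neutralize-at-exit'' configuration --- the binding neutralization there is beam-crossing at an interior time. So $\mathcal{Z}$ is really the union of your exit-condition set with this beam-crossing set, and the proposition (and your proof) holds as a characterization of $\partial\mathcal{Z}$ only under a parameter restriction ensuring the former set contains the latter (roughly, $\bar{\omega}R/v_A$ large enough relative to the admissible $\lvert\gamma\rvert$); this restriction, or the extra boundary component, needs to be stated explicitly. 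Separately, your ``if''/``only if'' labels in the endpoint-equivalence step are swapped, though the inequality you actually manipulate is the correct (nontrivial) direction.
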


\begin{figure}[t]
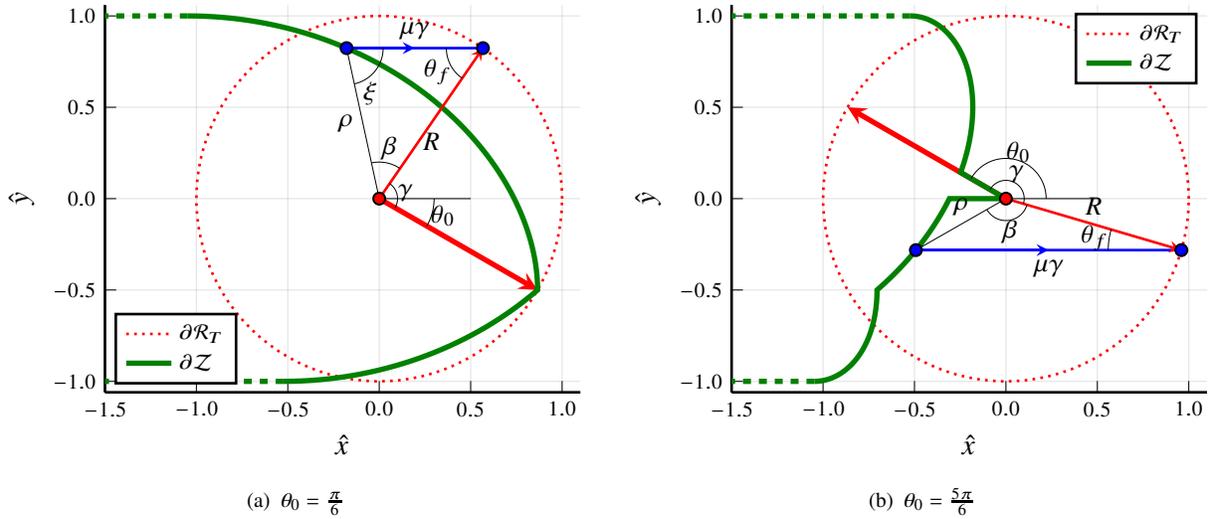

    \centering
    \tikzsetnextfilename{EZ_away_turret}
    \subfigure[$θ_0 = \tfrac{π}{6}$]{\includegraphics[width=0.49\linewidth]{EZ_away_turret}
    \label{fig:EZ_away_turret}}
    \tikzsetnextfilename{EZ_toward_Turret}
    \subfigure[$θ_0 = \tfrac{5π}{6}$]{\includegraphics[width=0.49\linewidth]{EZ_toward_Turret}
    \label{fig:EZ_toward_Turret}}
    \caption{
        Range-limited turret-evasion Engagement Zone for fixed Agent heading and Turret (speed ratio $μ = 0.5$) with maximum range of $R = 1$.
    }
    \label{fig:EZ_turret}
\end{figure}

\section{Application to Path Planning}
\label{sec:Application_to_Path_Planning}

One possible application of these results is to plan paths which stay outside of the EZ.
Perhaps the most basic instantiation of such a scenario is to specify initial and goal positions for a vehicle, place a range-limited Pursuer in between, and specify a desire for the vehicle to reach the goal position in minimum time.
This usage of the EZ was the subject of~\cite{weintraub2022optimal,dillon2023optimal}, although, in the former, considerations for acceptance of some entry into the EZ were included.
Mathematically speaking, we wish to solve the following problem:
\begin{equation}
    \label{eq:path_planning}
    \begin{alignedat}{3}
        &\min_{ψ(t)} \qquad && t_f \qquad&& \\
        &\text{subject to} \qquad && \mathbf{x}_{A}(0) = \begin{bmatrix} x_{A_0} & y_{A_0}\end{bmatrix}^\top, \qquad&& \\
        & && \mathbf{x}_{A}(t_f) = \begin{bmatrix} x_{A_f} & y_{A_f}\end{bmatrix}^\top, \qquad&& \\
        & && \left(\mathbf{x}_{A}(t),\ ψ(t)\right) \notin \mathcal{Z}, \qquad &&\forall t \in \left[ 0,\ t_f \right],
    \end{alignedat}
\end{equation}
where $\mathcal{Z}$ is given by~\Cref{eq:Z_pursuit}.
Note that the instantaneous EZ shape depends on the Agent's position as well as its heading.
\Cref{eq:path_planning} is solved by first discretizing the trajectory and then solving via a nonlinear program.
Specifically, even collocation is used (wherein the path constraint of staying outside the EZ is imposed at the discrete nodes) and the Ipopt solver~\cite{wachter2006implementation} is used within the JuMP package~\cite{lubin2023jump} for the Julia programming language.

In order to illustrate some of the potential benefits of the EZ-based path planning approach several circumnavigation-based nominal trajectories are described in the following.
Each circumnavigation-based trajectory has an associated circle, centered on $P$ with a particular radius, which the Agent navigates around in minimum time.
In particular, the Agent heads from its initial position towards the tangent on the associated circle, remains on the circle for some portion of its trajectory, and then departs the circle tangentially to reach the specified goal location.
Therefore, the associated travel time is given by
\begin{equation}
    \label{eq:t_circumnav}
    t_f^\bigcirc = \frac{1}{μ} \left( \sqrt{\overline{PA_0} - \hat{R}^2} + \sqrt{\overline{PA_f} - \hat{R}^2} + \hat{R} \lvert θ_2 - θ_1 \rvert\right),
\end{equation}
where $\hat{R}$ is the radius of the circle and $θ_1$, $θ_2$ are given by
\begin{equation}
    \label{eq:circumnav_θs}
    θ_1 = \atan\left( y_{A_0},\ x_{A_0} \right) - \cos^{-1}\left( \frac{\hat{R}}{\overline{PA_0}} \right), \qquad
    θ_2 = \atan\left( y_{A_f},\ x_{A_f} \right) + \cos^{-1}\left( \frac{\hat{R}}{\overline{PA_f}} \right),
\end{equation}
where $\atan$ is the two-argument inverse tangent function.
Three different circumnavigation radii are considered and summarized in~\Cref{tab:circumnavigation_trajectories}.
The last column in the table gives the improvement (negative) or loss (positive) of the EZ-based path plan w.r.t.\ the associated circumnavigation path plan in terms of time, i.e.,
\begin{equation}
    \label{eq:comparison_percentage}
    \text{EZ \% difference} = \left(\frac{t_f^\textrm{EZ} - t_f^\bigcirc}{t_f^\bigcirc}\right) \cdot 100.
\end{equation}
The trajectories are shown in~\Cref{fig:path-plan}.
\begin{table}[t]
    \centering
    \caption{Circumnavigation Trajectories}
    \label{tab:circumnavigation_trajectories}
    \begin{tabular}{cccc}
        \toprule
        Label & $\hat{R}$ & Time & EZ \% difference \\
        \midrule
        Reach & $R$ & 7.46 & -1.61\% \\
        Worst & $(1 + μ) R + r$ & 8.44 & -13.0\% \\
        Apol & $(1 - μ) R + r$ & 7.04 & 4.32\% \\
        \bottomrule
    \end{tabular}
\end{table}
\begin{figure}[t]
    \centering
    \tikzsetnextfilename{path-plan}
    \includegraphics[width=0.8\textwidth]{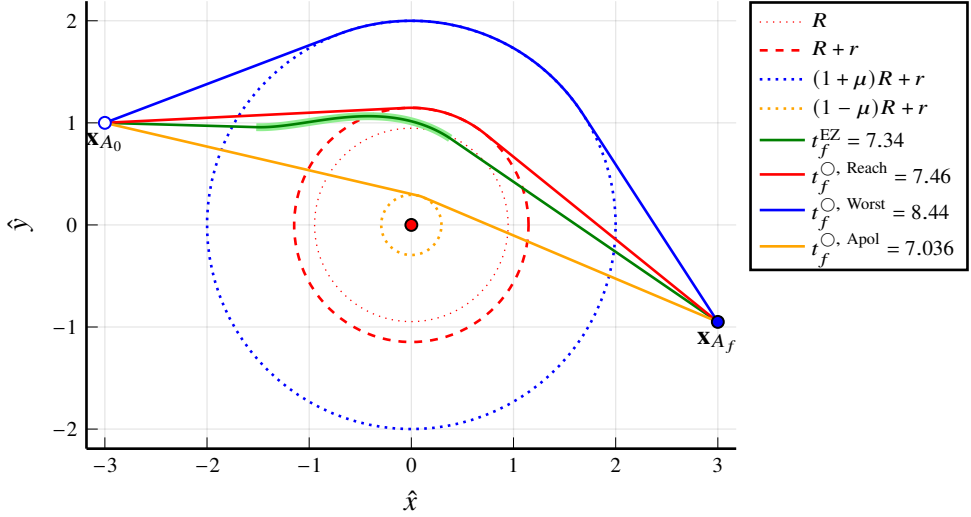}
    \caption{
        Example path planning problem with $μ = 0.9$, $r = 0.2$, and $R = \tfrac{2 - r}{μ + 1} \approx 0.95$, roughly corresponding to the example in~\cite[Fig. 4]{weintraub2022optimal}.
        The highlighted portion of the green EZ trajectory indicates activation of the constraint, i.e., $\left( \mathbf{x}_A(t),\ ψ(t) \right) \in \partial \mathcal{Z}$.
    }
    \label{fig:path-plan}
\end{figure}

The circumnavigation trajectory labeled \textit{Reach} is based on navigating around the circle corresponding to the reachable region of $P$ accounting for its capture radius, i.e. the capturability region, hence $\hat{R} = R + r$.
If the Agent follows this trajectory it need not react to Pursuer if the latter were to begin moving since capture is not possible for any of the points along the trajectory (depending on whether the boundary of the capturability region is considered capture or not).
The EZ-based path plan results in a time savings of 1.61\% w.r.t.\ this path plan due to the fact that the former passes through the capturability region.
For those points inside said region, if $P$ were to begin moving $A$ would be guaranteed to exit the region before capture could occur simply by maintaining its heading (by way of these points being outside the EZ).
Thus $P$ has little to no incentive to begin moving in the first place.

Next, the circumnavigation trajectory labeled \textit{Worst} is based on the distance $A$ would have to be from $P$ such that if $A$ were heading directly to $P$ when the latter began moving that $A$ would just barely avoid capture, hence $\hat{R} = (1 + μ) R + r$.
This trajectory is overly conservative because at no point along the trajectory is $A$ ever pointed towards $P$.
Nonetheless, this trajectory may be employed in practice due to the relative ease in estimating this worst-case distance as opposed to computing a more realistic EZ shape (e.g., in higher fidelity scenarios where the EZ can only be computed via a computationally expensive numerical procedure).
Additionally, safety is guaranteed \textit{irrespective} of the Agent's heading.
This is in contrast with the \textit{Reach} trajectory since the Agent's heading \textit{must} point tangentially or outside the capturability region when it is on the boundary.
The EZ-based path plan results in a time savings of 13\% w.r.t.\ this path plan.

Finally, the circumnavigation trajectory labeled \textit{Apol} is based on the distance $A$ would have to be from $P$ such that $A$ could just barely escape from $P$, hence $\hat{R} = (1-μ) R + r$.
This condition is described in detail in~\cite{weintraub2023range-limited}.
Navigating along this trajectory is inherently riskier as it may require $A$ to perform an extreme maneuver in order to avoid being captured by $P$ (unlike any of the other trajectories considered).
In fact, when $A$'s position is such that $\overline{AP} = (1 - μ) R + r$ there is only one heading which can guarantee escape: heading directly away from $P$.
Depending on where $A$ is located when $P$ begins moving, performing this evasive maneuver could lengthen $A$'s trajectory considerably.
This is a high price to pay considering that the EZ-based path plan is only 4.32\% longer.
The degree of lengthening that the Pursuer can achieve on the Evader's path is out of the scope of this study but will be considered in future work.
One instance of that problem has been considered, for example, in~\cite{zhang2022open}.

\section{Conclusion}
\label{sec:conclusion}

This paper has established a more formal definition for an Engagement Zone and derived some basic EZs associated with fundamental engagement models associated with pursuit-evasion and turret-evasion.
The basic EZs presented in this paper capture the most salient aspects of the Pursuer-Agent and Turret-Agent engagements: namely the geometry of the aspect angle and the relative differences in capability (i.e., maximum speeds, range, etc.).
One of the main advantages of utilizing EZs for path planning is that they encode an overall desire for Agent to go \textit{somewhere} without requiring an aggressive maneuver or active evasion should the Pursuer or Turret begin its pursuit.
It was shown that there is some advantage in terms of time savings in EZ-based navigation around a single range-limited Pursuer as compared with circumnavigating the capturability region.
Future work will focus on investigating worst-case path lengthening in the event that the Threat decides to engage with the Agent while it is \textit{en route}.

\bibliography{alexbib}

\end{document}